\documentclass[11pt]{article}
\usepackage{graphicx, color}
\usepackage[margin=1truein]{geometry}
\usepackage{amsmath,amsfonts,amssymb,amsthm,mathtools}
\usepackage{autobreak}
\usepackage{cite}
\usepackage{float}
\usepackage{indentfirst}
\usepackage{url}
\usepackage[colorlinks,linkcolor=red,citecolor={blue}]{hyperref}
\def\X{\mathbb{X}}

\newtheorem{thm}{Theorem}

\newtheorem{exa}{Example}

\newtheorem{cor}{Corollary}
\newtheorem{algo}{Algorithm}

\setlength{\parindent}{1em}
\numberwithin{equation}{section}
\allowdisplaybreaks[4]

\begin{document}
    \title{Strong Polynomiality of the Value Iteration Algorithm for Computing Nearly Optimal Policies for Discounted Dynamic Programming}
    \author{Eugene A. Feinberg\footnote{Corresponding author}\\ \emph{Department of Mathematics and Statistics, Stony Brook University}\\ \emph{Stony Brook, NY 11794-3600, USA}\\Eugene.Feinberg@stonybrook.edu\\\\ Gaojin He\\ \emph{Department of Mathematics, University of California, San Diego}\\\emph{La Jolla, CA 92093-0112, USA}\\Ghe@ucsd.edu}
    \date{\today}
    \maketitle
\begin{abstract}
\noindent This note provides  upper bounds on the number of operations required to compute by value iterations a nearly optimal policy for an infinite-horizon discounted Markov decision process with a finite number of states and actions.  For a given discount factor,  magnitude of the reward function, and desired closeness to optimality, these upper bounds are strongly polynomial in the number of state-action pairs, and one of the provided upper bounds has the property that it is a non-decreasing function of the value of the discount factor.

\end{abstract}
\begin{keywords}Markov Decision Process, Discounting, Algorithm, Complexity, Optimal Policy.
\end{keywords}
\section{Introduction}
Value and policy iteration algorithms are the major tools for solving infinite-horizon discounted Markov decision processes (MDPs). Policy iteration algorithms also can be viewed as implementations of specific versions of the simplex method applied to linear programming problems corresponding to  discounted MDPs \cite{LCM,Ye}. Ye \cite{Ye} proved that for a given discount factor the policy iteration algorithm is strongly polynomial as a function of the total number of state-action pairs. Kitahara and Mizuno\cite{Kita} extended Ye's~\cite{Ye} results by providing sufficient conditions for strong polynomiality of a simplex method for linear programming, and Scherrer~\cite{Scherrer} improved Ye's~\cite{Ye} bound for MDPs.  For deterministic MDPs Post and Ye \cite{YePost} proved that for policy iterations there is a polynomial upper bound on the number of operations, which does not depend on the value of the discount factor. Feinberg and Huang~\cite{WVI} showed that value iterations are not strongly polynomial for discounted MDPs.  Earlier Tseng~\cite{Tseng} proved weak polynomiality of value iterations.

In this note we show that the value iteration algorithm computes $\epsilon$-optimal policies in strongly polynomial time. This is an important observation because value iterations are broadly used in applications,  including reinforcement learning~\cite{BerR, Ber, Sut}, for computing nearly optimal policies.

Let us consider an MDP with a finite state space $\X=\{1,2,\ldots, m\}$ and with a finite nonempty sets of actions $A(x)$ available at each state $x\in\X.$  Each action set $A(x)$ consists of $k_x$ actions.   Thus, the total number of actions is $k=\sum_{x=1}^m k_x,$ and this number can be interpreted as the total number of state-action pairs.  Let $\alpha\in [0,1)$ be the discount factor.  According to \cite{Scherrer}, the policy iteration algorithm finds an optimal policy within $N^{PI}_I(\alpha)$ iterations, and each iterations requires at most $N^{PI}_O$ operations with
\begin{equation}\label{epiab}
N^{PI}_I(\alpha):=O\left(\frac{k}{1-\alpha}\log\frac{1}{1-\alpha}\right),\qquad\qquad N^{PI}_O:=O\left(m^3+mk\right).
\end{equation}
This paper shows that for each $\epsilon>0$, the value iteration algorithm finds an $\epsilon$-optimal policy within $N^{VI}_I(\epsilon)$ iterations, and each iterations requires at most $N^{VI}_O$ operations, where 
\begin{equation}\label{eubce12}
N^{VI(\epsilon)}_I(\alpha):=\max\left\{\left\lceil\frac{\log\frac{\left(1-\alpha\right)\epsilon}{\left[R+(1+\alpha)V\right]}}{\log\alpha}\right\rceil,1\right\}, \qquad\qquad N^{VI}_O:=O\left(mk\right),
\end{equation}
where $R$ and $V$ are the constants defined by a one-step cost function $r$ and a function of terminal rewards $v^0$. In addition, $N^{VI(\epsilon)}_I(\alpha)$ is non-decreasing in $\alpha\in [0,1)$.

To define the values of $R$ and $V,$   let us denote by $sp(u)$  the span seminorm of $u\in\mathbb{R}^m,$ where $\mathbb{R}^m$ is an $m$-dimensional Euclidean space,
\begin{equation*}
sp(u):=\max_{x\in\mathbb{X}}u(x)-\min_{x\in\mathbb{X}}u(x).
\end{equation*}
The properties of this seminorm can be found in \cite[pp.~196]{Puterman}.

Let $r(x,a)$ be the reward collected if the system is at the state $x\in \X$ and the action $a\in A(x)$ is chosen.  Let $v^0(x)$ be the reward collected at the final state $x\in\X.$  Then \begin{equation}\label{ev0ef} R:=sp(v^0).\end{equation}  We denote by
\begin{equation}\label{defv1}
v^1(x):=\max_{a\in A(x)} r(x,a),\qquad x\in\X,
\end{equation}
the maximal one-step reward that can be collected at the state $x.$ Then
\begin{equation}\label{ev1ef}
V:=sp(v^1)= \max_{x\in\X}\max_{a\in A(x)} r(x,a) - \min_{x\in\X}\max_{a\in A(x)} r(x,a).
\end{equation}
Observe that
\[ V\le sp(r):= \max_{x\in\X}\max_{a\in A(x)} r(x,a) - \min_{x\in\X}\min_{a\in A(x)} r(x,a).\]

 Of course, the total number of operations to find an $\epsilon$-optimal policy   is bounded above by $N^{VI(\epsilon)}_I(\alpha)\cdot N^{VI}_O$ for the value iteration algorithm. The total number of operations to find an optimal policy   is bounded above by $N^{PI}_I(\alpha)\cdot N^{PI}_O$ for the policy iteration algorithm.    Each iteration in the policy iteration algorithm requires solving a system of $m$ linear equations.  This can be done by Gaussian elimination within $O(m^3)$ operations.  This is the reason the formula for $N^{PI}_O$ in \eqref{epiab} depends on the term $m^3,$ which can be reduced to $m^\omega$ with $\omega < 3$ by using contemporary methods.  For example, $\omega=2.807$ for Strassen's algorithm\cite{le}. According to \cite{Tensor}, the best currently available $\omega=2.37286,$  but this method of solving linear equations is impractical due to the large value of the constant in $O.$ In addition to \eqref{eubce12}, more accurate upper bounds on a number of iterations are presented in \eqref{bound1}, \eqref{bound2}, and \eqref{bound3}, but they require additional definitions and calculations.

As well-known and clear from \eqref{epiab} and \eqref{eubce12}, the number of operations at each step is larger for the policy iteration algorithm than for the value iteration algorithm.  If the number of states $m$ is large, then the difference $(N^{PI}_O-N^{VI}_O)$ can be significant.  In order to accelerate policy iterations, the method of modified policy iterations was introduced in \cite{MPI}. This method uses value iterations to solve linear equations. As shown in Feinberg et al.~\cite{WMPI}, modified policy iterations and their versions are not strongly polynomial algorithms for finding optimal policies.

\section{Definitions}
Let $\mathbb{N}$ and $\mathbb{R}$ be the sets of natural numbers and real numbers respectively. For a finite set $E$, let $\left|E\right|$ denote the number of elements in the set $E$. We consider an MDP with a finite state space $\mathbb{X}=\{1,2,\ldots, m\},$ where $m\in\mathbb{N}$ is the number of states, and nonempty finite action sets $A(x)$ available at  states $x\in\X.$ Let $\mathbb{A}:=\bigcup_{x\in\mathbb{X}}A(x)$ be the action set. We recall that  $k=\sum_{x\in\X}\left|A(x)\right|$ is the total number of actions at all states or, in slightly different terms, the number of all state-action pairs. For each $x\in\mathbb{X},$ if an action $a\in A(x)$ is selected at the state $x\in\X,$ then a one-step reward $r(x,a)$ is collected and the process moves to the next state $y\in\X$ with the probability $p(y|x,a),$  where $r(x,a)$ is a real number and $\sum_{y\in\X} p(y|x,a)=1.$   The process continues over a finite or infinite planning horizon. For a finite-horizon problem, the terminal real-valued reward $v^0(x)$ is collected at the final state $x\in\X.$

A \textit{deterministic policy} is a mapping $\phi:\mathbb{X}\mapsto\mathbb{A}$ such that $\phi(x)\in A(x)$ for each $x\in\mathbb{X},$ and, if the process is at a state $x\in\X,$ then the action $\phi(x)$ is selected.  A general policy can be randomized and history-dependent; see e.g., Puterman~\cite[p.~154]{Puterman} for definitions of various classes of policies. We denote by $\Pi$  the set of all policies. 


Let $\alpha\in[0,1)$ be the \textit{discount factor}. For a policy $\pi\in\Pi$ and for an  initial state $x\in\X,$ the expected total discounted reward for an $n$-horizon  problem is
\begin{equation*}
v_{n,\alpha}^{\pi}(x) :=\mathbb{E}_x^\pi\left[\sum_{t=0}^{n-1}\alpha^kr(x_t,a_t)+\alpha^nv^0(x_n)\right],\qquad n\in\mathbb{N},
\end{equation*}
and for the infinite-horizon problem it is
\begin{equation*}
v_\alpha^{\pi}(x) :=\mathbb{E}_x^\pi\sum_{t=0}^{\infty}\alpha^kr(x_t,a_t),
\end{equation*}
where $\mathbb{E}_x^\pi$ is the expectation defined by the initial state $x$ and the policy $\pi,$ and  where $x_0a_0\cdots x_{t}a_{t}  \ldots$ is a  trajectory of the process consisting of states $x_t\in\X$ and actions $a_t\in A(x_t)$ at epochs $t=0,1,\ldots\ .$
The \textit{value functions} are defined for initial states $x\in\X$ as
\begin{equation*}
v_{n,\alpha}(x) : = \sup_{\pi\in\Pi}v_{n,\alpha}^\pi(x), \qquad n\in\mathbb{N}, 
\end{equation*}
for $n$-horizon problems,   and
\begin{equation*}
v_\alpha(x) := \sup_{\phi\in\mathbb{F}}v_\alpha^\phi(x)
\end{equation*}
for infinite-horizon problems.  Note that $v_{0,\alpha}^\pi=v_{0,\alpha}=v^0$ for all $\alpha\in[0,1)$ and $\pi\in\Pi$.

A policy $\pi$ is called \textit{optimal} ($n$-\textit{horizon optimal} for $n=1,2,\ldots$) if $v_\alpha^\pi(x)=\sup_{\pi\in\Pi}v_\alpha^\pi(x)$   ($v_{n,\alpha}^\pi(x)=\sup_{\pi\in\Pi}v_{n,\alpha}^\pi(x)$) for all $x\in\mathbb{X}$. It is well-known that, for  discounted MDPs with finite action sets,  finite horizon problems have optimal policies and  infinite horizon problems have deterministic optimal policies; see \cite[p.~154]{Puterman}.

 A policy $\pi$ is called $\epsilon$-optimal for $\epsilon\ge 0$ if $v_{\alpha}^\pi (x)\ge v_{\alpha}(x)-\epsilon$ for all $x\in\X.$  A $0$-optimal policy is optimal.   The objective of this paper is to estimate the complexity of the value iteration algorithm for finding a deterministic $\epsilon$-optimal policy for  $\epsilon>0$.

\section{Main Results}

For a real-valued function $v:\X\to\mathbb{R},$ let us define
\begin{equation}\label{oper}
T_\alpha^a v(x):= r(x,a)+ \alpha\sum_{y\in\mathbb{X}}p(y|x,a)v(y), \qquad x\in\X,\ a\in A(x).
\end{equation}
We shall use the notation $T_\alpha^\phi v(x):= T_\alpha^{\phi(x)} v(x)$ for a deterministic policy $\phi.$  For $v:\X\to\mathbb{R}$ we also define the optimality operator $T_\alpha,
$
\begin{equation}\label{operm}
T_\alpha v(x):= \max_{a\in A(x)} T_\alpha^a v(x), \qquad x\in\X.
\end{equation}
Every real-valued function $v$ on $\X$ can be identified with a vector $v=(v(1),\ldots,v(m)).$  Therefore, all real-valued functions on $\X$ form the $m$-dimensional
Euclidean space $\mathbb{R}^m.$

For each $n\in\mathbb{N}$ and all $v^0\in\mathbb{R}^m$, the expected total discounted rewards $v_{n,
\alpha}^\phi$ and $v_{\alpha}^\phi$ satisfy the equations
\begin{equation*}
v^\phi_{n,\alpha}  = T^\phi_\alpha v^\phi_{n-1,\alpha}, \quad v^\phi_{\alpha}  = T^\phi_\alpha v^\phi_{\alpha},
\end{equation*}
the value functions $v_{n,\alpha},$ $v_\alpha$  satisfy the \textit{optimality equations}
\begin{equation*}
v_{n,\alpha}  = T_\alpha v_{n-1,\alpha}, \quad v_{\alpha}  = T_\alpha v_{n-1,\alpha},
\end{equation*}
and \emph{value iterations} converge to the infinite-horizon expected total rewards and optimal values
\begin{equation}\label{op}
\begin{aligned}
v_{\alpha}^\phi = & \lim_{n\to\infty}{\left(T_\alpha^\phi\right)}^nv^0 = T_\alpha^\phi v_\alpha^\phi,\\
v_{\alpha} = \lim_{n\to\infty}{\left(T_\alpha\right)}^n&v^0 = \lim_{n\to\infty}v_{n,\alpha} = T_\alpha v_\alpha,
\end{aligned}
\end{equation}
and a deterministic policy $\phi$ is optimal if and only if $T^\phi_\alpha v_\alpha =  T_\alpha v_\alpha;$ see, e.g., \cite{Fhand}, \cite[pp.~146-151]{Puterman}. Therefore,
if we consider the nonempty sets
\begin{equation*}
 A_{\alpha}(x)= \{a\in A(x): v_{\alpha}(x)=T^a_\alpha v_{\alpha}(x)\},
 \end{equation*}
 then a deterministic policy $\phi$ is optimal if and only if $\phi(x)\in A_{\alpha}(x)$ for all $x\in\X.$

For a given $\epsilon>0,$ the following value iteration algorithm computes a deterministic $\epsilon$-optimal policy.  It uses a stopping rule based on the value of the span $sp(v_{n,\alpha}-v_{n-1,\alpha})$. As mentioned in \cite[p. 205]{Puterman}, this algorithm generates the same number of iterations as the relative value iteration algorithm originally introduced to accelerate value iterations.

%
%
\begin{algo}\label{algo}(Computing  a deterministic $\epsilon$-optimal policy by value iterations).\rm{
\begin{align*}
& \leftline{For given MDP, discount factor $\alpha\in (0,1),$ and constant $\epsilon>0$:}\\
& \leftline{1.  select a vector $u:=v^0\in\mathbb{R}^m$ and a constant $\Delta> \frac{1-\alpha}{\alpha}\epsilon$ (e.g., choose $\Delta:=\epsilon/\alpha$);}\\
& \leftline{2. while $\Delta>\frac{1-\alpha}{\alpha}\epsilon$ compute $v=T_\alpha u,$ $\Delta:=sp(u-v)$ and set $u^*:= u,$ $u:=v$ endwhile;}\\  
& \leftline{3. choose a deterministic policy $\phi$ such that $v=T_\alpha^\phi u^*;$ this policy is $\epsilon$-optimal.}
\end{align*} }
\end{algo}

If $\alpha=0,$ then a deterministic policy $\phi$ is optimal if and only if $r(x,\phi(x))=\max_{a\in A(x)}\{r(x,a)\}.$
As well-known, Algorithm~\ref{algo} converges within a finite number of iterations (e.g., this follows from  \eqref{op}) and returns an $\epsilon$-optimal policy $\phi$ (e.g., this follows from \cite[Proposition 6.6.5]{Puterman}).

Following Puterman~\cite[Theorem 6.6.6]{Puterman}, let us define
\begin{equation}\label{gamma}
\gamma:= \max_{\substack{x,y\in\mathbb{X}\\ a\in A(x),b\in A(y)}}\left[1-\sum_{z\in\mathbb{X}}\min\left\{p(z|x,a),p(z|y,b)\right\}\right].
\end{equation}
We notice that $0\le\gamma\le1$. We assume $\gamma>0$ since otherwise $p(z|x,a)=p(z|y,b)$ for any $x,y,z\in\mathbb{X}$ and $a\in A(x),b\in A(y).$ If $\gamma =0,$ then we have again that a deterministic policy $\phi$ is optimal if and only if $r(x,\phi(x))=\max_{a\in A(x)}\{r(x,a)\}.$ In this case Algorithm~\ref{algo} stops at most after the second iteration  and returns a deterministic optimal policy. If an MDP has deterministic probabilities and there are two or more deterministic policies with nonidentical stochastic matrices, then $\gamma=1$.

Computing the value of $\gamma$ requires computing the sum in \eqref{gamma} for all couples $\{(x,a),(y,b))\}$  of state-action pairs such that $(x,a)\ne(y,b).$ The identical pairs $(x,a)=(y,b)$ can be excluded since for such pairs the value of the function to be maximized in \eqref{gamma} is 0, which is the smallest possible value.   The total number of such couples is $k(k-1)/2=O(k^2)$.   The number of arithmetic operations in \eqref{gamma}, which are additions, is $m$ for each couple.  Therefore, the straightforward computation of  $\gamma$ requires $O(mk^2)$ operations, which can be significantly larger than the complexity to compute an deterministic $\epsilon$-optimal policy, which is the product of $N^{VI(\epsilon)}_I(\alpha)$ and $N^{VI}_O$ defined in \eqref{eubce12}. Puterman~\cite[Eq. (6.6.16)]{Puterman} also  provides an upper bound $\gamma'\in [\gamma,1],$ where $\gamma':= 1-\sum_{z\in\X} \min_{x\in\X, a\in A(x)} p(z|x,a),$  whose computation requires $O(mk)$ operations.

\begin{thm}\label{th1}
For $\alpha\in(0,1)$ and $\epsilon>0$, the number of iterations for Algorithm~\ref{algo} to find a deterministic $\epsilon$-optimal  policy is bounded above by
\begin{equation}\label{bound1}
n^*(\alpha):=\max\left\{\left\lceil\frac{\log\frac{(1-\alpha)\epsilon\gamma}{sp(v_{1,\alpha}-v^0)}}{\log(\alpha\gamma)}\right\rceil,1\right\}
\end{equation}
if $sp(v_{1,\alpha}-v^0)\ne0$ and $\gamma>0.$ If $sp(v_{1,\alpha}-v^0)=0,$ then $n^*(\alpha)=1$ and Algorithm~\ref{algo} returns a deterministic optimal policy. If $\gamma=0,$ then $n^*(\alpha)\in \{1,2\}$  and Algorithm~\ref{algo}  returns a deterministic optimal policy. In addition, each iteration uses at most $O(mk)$ operations.
\end{thm}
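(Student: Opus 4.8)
The plan is to run everything off the fact that the optimality operator $T_\alpha$ is a contraction of modulus $\alpha\gamma$ in the span seminorm. Following Puterman~\cite[Theorem 6.6.6]{Puterman}, for all $u,w\in\mathbb{R}^m$ one has $sp(T_\alpha u-T_\alpha w)\le \alpha\gamma\, sp(u-w)$, with $\gamma$ as in \eqref{gamma}. Applying this to the value-iteration iterates $v_{n,\alpha}=T_\alpha v_{n-1,\alpha}$ and iterating from $n=1$ (where $v_{0,\alpha}=v^0$) yields the geometric decay
\[
 sp(v_{n,\alpha}-v_{n-1,\alpha})\le (\alpha\gamma)^{\,n-1}\,sp(v_{1,\alpha}-v^0),\qquad n\ge 1.
\]
I would then trace Algorithm~\ref{algo}: at the end of its $n$-th pass through the while loop one has $u^*=v_{n-1,\alpha}$, $u=v=v_{n,\alpha}$, and $\Delta=sp(v_{n,\alpha}-v_{n-1,\alpha})$, so the loop terminates at the first $n$ for which $sp(v_{n,\alpha}-v_{n-1,\alpha})\le \tfrac{1-\alpha}{\alpha}\epsilon$.

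For the main bound I would observe that it suffices to drive the right-hand side of the decay estimate below the threshold, i.e. to take $n$ with $(\alpha\gamma)^{n-1}sp(v_{1,\alpha}-v^0)\le \tfrac{1-\alpha}{\alpha}\epsilon$. Taking logarithms of $(\alpha\gamma)^{n-1}\le (1-\alpha)\epsilon/\big(\alpha\, sp(v_{1,\alpha}-v^0)\big)$ and dividing by $\log(\alpha\gamma)$ — which is negative since $\alpha\gamma\le\alpha<1$, so the inequality reverses — bounds $n-1$ from below; rewriting this as a lower bound on $n$ turns the $+1$ into an extra $\log(\alpha\gamma)$ term in the numerator that cancels the stray factor $1/\alpha$ and leaves precisely the threshold of \eqref{bound1}. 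Taking the ceiling and the maximum with $1$ (at least one pass always runs) gives $n^*(\alpha)$. This step requires $\gamma>0$ so that $\log(\alpha\gamma)$ is finite and $sp(v_{1,\alpha}-v^0)\ne0$ so that the logarithm's argument is defined, which is exactly why these are the standing hypotheses of this part.

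The two degenerate cases I would treat separately. If $sp(v_{1,\alpha}-v^0)=0$, then $\Delta=0$ after the first pass, so $n^*(\alpha)=1$; moreover $T_\alpha v^0=v^0+c\mathbf 1$ for a scalar $c$, and for the greedy policy $\phi$ with $T_\alpha^\phi v^0=T_\alpha v^0$ I would compute, using $v_\alpha^\phi=(I-\alpha P^\phi)^{-1}r^\phi$ with $P^\phi,r^\phi$ the transition matrix and reward vector of $\phi$ and the identity $(I-\alpha P^\phi)^{-1}\mathbf 1=\tfrac{1}{1-\alpha}\mathbf 1$, that $v_\alpha^\phi=v^0+\tfrac{c}{1-\alpha}\mathbf 1$, and then verify $T_\alpha v_\alpha^\phi=v_\alpha^\phi$; uniqueness of the fixed point of $T_\alpha$ then forces $v_\alpha^\phi=v_\alpha$, so $\phi$ is optimal. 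If $\gamma=0$, the transition probabilities do not depend on the action, so $T_\alpha v$ equals $v^1$ plus a scalar multiple of $\mathbf 1$; hence $v_{2,\alpha}-v_{1,\alpha}$ is a constant vector, $\Delta=0$ by the second pass, and $n^*(\alpha)\in\{1,2\}$. Since in this case (as already noted before the theorem) a policy is optimal exactly when it maximizes the one-step reward, and the greedy choice does so irrespective of $u^*$, the returned policy is optimal.

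Finally, for the per-iteration cost: computing $T_\alpha u$ requires, for each of the $k$ state-action pairs, the inner product $\sum_{y}p(y|x,a)u(y)$ at cost $O(m)$, hence $O(mk)$ in total, while the maximizations over the sets $A(x)$ and the evaluation of the span cost only $O(k)$ and $O(m)$; thus each iteration uses $O(mk)$ operations. I expect the one genuinely non-mechanical step to be the optimality claim when $sp(v_{1,\alpha}-v^0)=0$: there the span stopping test has driven the span increment to zero but has not driven the iterate to $v_\alpha$, so optimality of the greedy policy is not immediate from convergence of value iterations and must instead be extracted from the fixed-point computation above.
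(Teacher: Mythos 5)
Your proposal follows essentially the same route as the paper's proof: the span-contraction property $sp(T_\alpha u-T_\alpha w)\le\alpha\gamma\,sp(u-w)$ yields the geometric decay $sp(v_{n,\alpha}-v_{n-1,\alpha})\le(\alpha\gamma)^{n-1}sp(v_{1,\alpha}-v^0)$, which is exactly the paper's inequality \eqref{b} (invoked there as Puterman's Corollary~6.6.8), and your logarithmic manipulation --- including the cancellation of the stray factor $1/\alpha$ that produces the $\gamma$ in the numerator --- is precisely how the minimal $n$ satisfying \eqref{c} turns into \eqref{bound1}. Your handling of the degenerate cases is in fact more detailed than the paper's, which dismisses $sp(v_{1,\alpha}-v^0)=0$ as ``obvious'' and refers $\gamma=0$ to the discussion preceding the theorem; your fixed-point computations fill these in correctly (one wording slip: $\gamma=0$ forces $p(\cdot|x,a)$ to coincide for \emph{all} state-action pairs, not merely to be action-independent, and it is this full independence that your identity $T_\alpha v=v^1+c\,\mathbf{1}$ actually uses).

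The one genuine omission is in the main case: you bound the iteration at which the test $sp(v_{n,\alpha}-v_{n-1,\alpha})\le\frac{1-\alpha}{\alpha}\epsilon$ first passes, but you never argue that the policy $\phi$ chosen in step 3, i.e.\ the one with $T_\alpha^\phi v_{n-1,\alpha}=T_\alpha v_{n-1,\alpha}$, is $\epsilon$-optimal at that point. That implication --- the span-based stopping rule guarantees $\epsilon$-optimality of the greedy policy --- is not automatic, since the iterate $v_{n-1,\alpha}$ may still be far from $v_\alpha$; the paper disposes of it by citing Puterman, Proposition~6.6.5. Notably, you flagged and resolved exactly this kind of issue in the case $sp(v_{1,\alpha}-v^0)=0$, but the generic case needs the same care: without it you have bounded the termination time of the loop, not the time to find a deterministic $\epsilon$-optimal policy. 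Adding that citation (or a short proof of the span-based optimality criterion) closes the gap.
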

\begin{proof}As follows from \eqref{oper} and \eqref{operm}, each iteration uses at most $O(mk)$ arithmetic operations. Let $n$ be the actual number of iterations. According to its steps 2 snd 3, the algorithm returns the deterministic policy $\phi,$ for which $v_{n,\alpha}=T_\alpha v_{n-1,\alpha}=T_\alpha^\phi v_{n-1,\alpha}$ and
\begin{equation*}
sp(T_\alpha v_{n-1,\alpha}-v_{n-1,\alpha})\le\frac{1-\alpha}{\alpha}\epsilon.
\end{equation*}
In view of \cite[Proposition~6.6.5, p.~201]{Puterman}, this $\phi$ is $\epsilon$-optimal. By \cite[Corollary~6.6.8, p.~204]{Puterman},
\begin{equation}\label{b}
sp(v_{n,\alpha}-v_{n-1,\alpha})\le\left(\alpha\gamma\right)^{n-1}sp(v_{1,\alpha}-v^0).
\end{equation}
Therefore, the minimal number $n\in\mathbb{N}$ satisfying
\begin{equation}\label{c}
\left(\alpha\gamma\right)^{n-1}sp(v_{1,\alpha}-v^0)\le\frac{1-\alpha}{\alpha}\epsilon
\end{equation}
leads to \eqref{bound1}. The case $sp(v_{1,\alpha}-v^0)=0$ is obvious, and the case $\gamma=0$ is discussed above.
\end{proof}

\begin{exa}\label{ex1} This example shows that the bound in \eqref{bound1} can be exact, and it may not be monotone in the discount factor. {\rm   Let the state space be $\mathbb{X}=\left\{1,2,3\right\}$ and the action space be $\mathbb{A}=\left\{b,c\right\}$. Let $A(1)=\mathbb{A}$, $A(2)=A(3)=\left\{b\right\}$ be the sets of actions available at states $1,$ $2,$ and $3$ respectively. The transition probabilities are given by $p(3|1,b)=p(2|1,c)=p(2|2,b)=p(3|3,b)=1$. The one-step rewards are  $r(1,b)=r(1,c)=0, r(2,b)=1$, and $r(3,b)=-1;$ see \autoref{fig:2}.

\begin{figure}[h!]
\centerline{\includegraphics[scale=1.2]{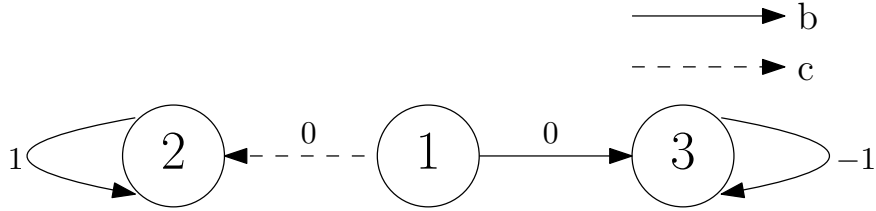}}
\caption{MDP diagram for Example \ref{ex1}.}
\label{fig:2}
\end{figure}
We set $v^0(1)=1,v^0(2)=2,v^0(1)=-2$. As discussed above,  $\gamma=1$ for this MDP with deterministic transitions. Straightforward calculations imply that
\begin{equation*}
\begin{aligned}
&v_{n,\alpha} =
\left(
\alpha^n+\sum_{k=1}^n\alpha^k,\ \alpha^n+\sum_{k=0}^n\alpha^k,\ -\alpha^n-\sum_{k=0}^n\alpha^k
\right),\\
&v_{n,\alpha}-v_{n-1,\alpha}=\left(2a^n-a^{n-1},\ 2a^n-a^{n-1},\ -2a^n+a^{n-1} \right),\\
&sp(v_{n,\alpha}-v_{n-1,\alpha}) = 2\alpha^{n-1}\left|2\alpha-1\right|=\alpha^{n-1}sp(v_{1,\alpha}-v^0),
\end{aligned}
\end{equation*}
where the $i$-th coordinates of the vectors correspond to the states $i=1,2,3$. The last displayed equality implies that inequality \eqref{b} holds in the form of an equality for this example.  Therefore, the bound in \eqref{bound1} is also the actual number of iterations executed by Algorithm~\ref{algo} for  this MDP, which is
\begin{equation*}
n^*(\alpha)=\max\left\{\left\lceil\frac{\log\frac{(1-\alpha)\epsilon}{2\left|2\alpha-1\right|}}{\log\alpha}\right\rceil,1\right\}.
\end{equation*}
for $\alpha\ne0.5$ and $\epsilon>0$. If $\alpha=0.5,$ then Algorithm~\ref{algo} stops after the first iteration. Let $\epsilon=0.02.$ Then  $n^*(0.24)=3,$ $n^*(0.47)=4,$ and $n^*(0.48)=3$, which shows that $n^*(\alpha)$ may not be monotone in $\alpha$.  \hfill $\square$ }
\end{exa}
Let us consider the vector $v^1\in\mathbb{R}^m$ defined in \eqref{defv1}.
Note that $v^1=v_{1,\alpha}$ if $v^0\equiv 0$. In the next corollary, we assume that $sp(v^1)+sp(v^0)>0$ and $\gamma>0$ since otherwise Algorithm~\ref{algo} stops after the first iteration and returns a deterministic optimal policy.
\begin{thm}
Let $\alpha\in(0,1)$. For fixed $\epsilon>0,\gamma\in(0,1]$, and $v^0,v^1\in\mathbb{R}^m$ such that $sp(v^1)+sp(v^0)>0$,  Algorithm~\ref{algo}  finds a deterministic $\epsilon$-optimal policy after the finite number iteration  bounded above by
\begin{equation}\label{bound2}
F(\alpha):=\max\left\{\left\lceil\frac{\log\frac{(1-\alpha)\epsilon\gamma}{sp(v^1)+(1+\alpha)sp(v^0)}}{\log(\alpha\gamma)}\right\rceil,1\right\}.
\end{equation}
Furthermore,  the function $F(\alpha)$ defined in \eqref{bound2} for $\alpha\in [0,1)$  has the following properties for an arbitrary fixed parameter $\gamma\in (0,1]$:
\begin{flalign*}
&\emph{(a)}\ \lim_{\alpha\downarrow0}F(\alpha)=1,\ \lim_{\alpha\uparrow1}F(\alpha)=+\infty;\nonumber&&\\
&\emph{(b)}\ \text{$F(\alpha)$ is non-decreasing in $\alpha$.}\nonumber&&
\end{flalign*}
\end{thm}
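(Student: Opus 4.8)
The plan is to derive the bound \eqref{bound2} from Theorem~\ref{th1} by replacing $sp(v_{1,\alpha}-v^0)$ in \eqref{bound1} with the larger, explicitly $\alpha$-dependent quantity $sp(v^1)+(1+\alpha)sp(v^0)$, and then to analyze $F$ by elementary calculus. First I would establish the span inequality
\[
sp(v_{1,\alpha}-v^0)\le sp(v^1)+(1+\alpha)sp(v^0).
\]
Since $v_{1,\alpha}=T_\alpha v^0$, I would split $v_{1,\alpha}-v^0=(v_{1,\alpha}-v^1)+(v^1-v^0)$ and use subadditivity of the span together with $sp(-v^0)=sp(v^0)$ to get $sp(v^1-v^0)\le sp(v^1)+sp(v^0)$. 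For the remaining term, the coordinatewise two-sided estimate $\alpha\min_{y}v^0(y)\le v_{1,\alpha}(x)-v^1(x)\le\alpha\max_{y}v^0(y)$, which follows from \eqref{oper}, \eqref{operm}, \eqref{defv1} by bounding $\sum_{y}p(y|x,a)v^0(y)$ between $\min_y v^0(y)$ and $\max_y v^0(y)$, shows that every coordinate of $v_{1,\alpha}-v^1$ lies in an interval of length $\alpha\,sp(v^0)$, so $sp(v_{1,\alpha}-v^1)\le\alpha\,sp(v^0)$. Adding the two pieces gives the inequality.

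With the inequality in hand, the bound is immediate. Because $(1-\alpha)\epsilon\gamma>0$, the denominator $sp(v^1)+(1+\alpha)sp(v^0)$ is positive under the standing assumption $sp(v^1)+sp(v^0)>0$, and $\log(\alpha\gamma)<0$ on $(0,1)$, so enlarging the denominator inside the logarithm in \eqref{bound1} only enlarges the whole fraction; hence $F(\alpha)\ge n^*(\alpha)$, and the bound on the number of iterations follows from Theorem~\ref{th1}. When $sp(v_{1,\alpha}-v^0)=0$ the algorithm stops in one step, and $F(\alpha)\ge1$ suffices.

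For the analytic assertions, write $g(\alpha):=N(\alpha)/D(\alpha)$ with $N(\alpha):=\log\frac{(1-\alpha)\epsilon\gamma}{sp(v^1)+(1+\alpha)sp(v^0)}$ and $D(\alpha):=\log(\alpha\gamma)$, so that $F=\max\{\lceil g\rceil,1\}$. Part (a) is a direct limit computation: as $\alpha\downarrow0$ the numerator $N$ tends to the finite value $\log\frac{\epsilon\gamma}{sp(v^1)+sp(v^0)}$ while $D\to-\infty$, so $g\to0$ and $F\to1$; as $\alpha\uparrow1$ the numerator tends to $-\infty$ (its log argument tends to $0^+$) while $D\to\log\gamma<0$ if $\gamma<1$ and $D\to0^-$ if $\gamma=1$, so $g\to+\infty$ and $F\to+\infty$.

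The main work, and the step I expect to be the chief obstacle, is the monotonicity in (b), because of the interplay between the sign of $g$ and the ceiling/$\max$. Since $\lceil\cdot\rceil$ and $\max\{\cdot,1\}$ preserve monotonicity, it suffices to control $g$. On $(0,1)$ one has $D<0$ and $D'=1/\alpha>0$, and a short computation gives $N'(\alpha)=-\frac{1}{1-\alpha}-\frac{sp(v^0)}{sp(v^1)+(1+\alpha)sp(v^0)}<0$, so $N$ is strictly decreasing. The sign of $g'$ equals that of $N'D-ND'$, whose term $-ND'$ carries the sign of $-N$ and is therefore not obviously controlled. The resolution is to split on the sign of $N$. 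Where $N<0$ one has $N'D>0$ and $-ND'>0$, hence $g'>0$; moreover $g=N/D>0$ there, which is exactly the region where $F$ can exceed $1$. Where $N\ge0$ one has $g\le0$, hence $F=\max\{\lceil g\rceil,1\}=1$ is constant. Because $N$ is strictly decreasing, $\{\alpha:N(\alpha)\ge0\}$ is a lower subinterval (possibly empty) on which $F\equiv1$, while $\{\alpha:N(\alpha)<0\}$ is the complementary upper subinterval on which $F=\lceil g\rceil\ge1$ is non-decreasing because $g$ is increasing. Since $F\ge1$ throughout and equals $1$ on the lower part, comparing any $\alpha_1<\alpha_2$ across the two regions shows $F(\alpha_1)\le F(\alpha_2)$, so $F$ is non-decreasing on $[0,1)$.
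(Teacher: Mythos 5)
Your proof is correct and takes essentially the same route as the paper's: the span bound $sp(v_{1,\alpha}-v^0)\le sp(v^1)+(1+\alpha)sp(v^0)$ followed by comparison with the bound \eqref{bound1} of Theorem~\ref{th1}, and for (b) the same case split at the point where the argument of the logarithm in the numerator crosses $1$ (your condition $N\ge 0$), with $F\equiv 1$ on the lower subinterval and an increasing ratio on the upper one. The only cosmetic differences are that the paper derives the span bound via $sp(v_{1,\alpha})\le sp(v^1)+\alpha\, sp(v^0)$ plus subadditivity of the span, rather than your decomposition $(v_{1,\alpha}-v^1)+(v^1-v^0)$, and it deduces monotonicity of the ratio directly from the monotonicity and signs of the numerator and denominator instead of your quotient-rule computation.
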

\begin{proof}
By \eqref{oper} and \eqref{operm},
\begin{equation*}
\begin{aligned}
v_{1,\alpha}(x) & =\max_{a\in A(x)}\left\{r(x,a)+ \alpha\sum_{y\in\mathbb{X}}p(y|x,a)v^0(y)\right\}\\
& \le  \max_{a\in A(x)}\left\{r(x,a)+ \alpha\sum_{y\in\mathbb{X}}p(y|x,a)\max_{z\in\mathbb{X}}v^0(z)\right\}\\
& = \max_{a\in A(x)}r(x,a)+\alpha\max_{z\in\mathbb{X}}v^0(z).
\end{aligned}
\end{equation*}
Similarly,
$
v_{1,\alpha}(x) \ge \max_{a\in A(x)}r(x,a)+\alpha\min_{z\in\mathbb{X}}v^0(z).
$
Therefore,
\begin{equation*}
\begin{aligned}
sp(v_{1,\alpha}) & = \max_{x\in\mathbb{X}}v_{1,\alpha}(x)-\min_{x\in\mathbb{X}}v_{1,\alpha}(x)\\
& \le \max_{x\in\mathbb{X}}\max_{a\in A(x)}r(x,a)+\alpha\max_{z\in\mathbb{X}}v^0(z)-\min_{x\in\mathbb{X}}\max_{a\in A(x)}r(x,a)-\alpha\min_{z\in\mathbb{X}}v^0(z)\\
& =sp(v^1)+\alpha sp(v^0).
\end{aligned}
\end{equation*}
By the properties of seminorm provided in \cite[p.~196]{Puterman},
\begin{equation*}
sp(v_{1,\alpha}-v^0)\le sp(v_{1,\alpha})+sp(v^0)\le sp(v^1)+(1+\alpha)sp(v^0),
\end{equation*}
which leads to $F(\alpha)\ge n^*(\alpha),$ where $n^*(\alpha)$ is defined in \eqref{bound1}.\\
\indent The formulae in (a) follow directly from \eqref{bound2}. To prove (b), we recall that $R=sp(v^1)$ and $V=sp(v^0);$ see \eqref{ev1ef} and \eqref{ev0ef}. By the assumption in the theorem, $R+(1+\alpha)V>0.$
 The function $\frac{(1-\alpha)\epsilon\gamma}{R+(1+\alpha)V}$ is positive and decreasing in $\alpha\in (0,1),$ and the function $\alpha\gamma$ is increasing in $\alpha\in (0,1).$  In addition, $\alpha\gamma\in (0,1).$  Therefore, the function
 \[f(\alpha):=\frac{\log\frac{(1-\alpha)\epsilon\gamma}{R+(1+\alpha)V}}{\log(\alpha\gamma)}\] is increasing when $\frac{(1-\alpha)\epsilon\gamma}{R+(1+\alpha)V}\le 1$, which is $\alpha\ge\frac{\epsilon\gamma-R-V}{\epsilon\gamma+V}$. This implies that $F(\alpha)$ is increasing on the interval  $(b,1),$ where $b:=\max\{\frac{\epsilon\gamma-R-V}{\epsilon\gamma+V},0\}$.  Thus, if $\frac{\epsilon\gamma-R-V}{\epsilon\gamma+V}\le 0,$ then the theorem is proved. Now let $\frac{\epsilon\gamma-R-V}{\epsilon\gamma+V}> 0.$  We choose an arbitrary $\alpha\in (0, \frac{\epsilon\gamma-R-V}{\epsilon\gamma+V}].$  Then $\frac{(1-\alpha)\epsilon\gamma}{R+(1+\alpha)V}\ge1,$ which implies $f(\alpha)\le0.$  In view of \eqref{bound2}, $F(\alpha)=1,$ and the function $F(\alpha)$ is non-decreasing on $(0,1).$\end{proof}
As explained in the paragraph preceding Theorem~\ref{th1}, it may be  time-consuming to find the actual value of $\gamma$ for an MDP.  The following corollaries provide additional bounds.
\begin{cor}
Let $\alpha\in(0,1)$. For a fixed $\epsilon>0,$  if $sp(v^1)+sp(v^0)>0$, then 
\begin{equation}\label{bound3}
n^*(\alpha)\le\max\left\{\left\lceil\frac{\log\frac{(1-\alpha)\epsilon}{sp({v_{1,\alpha}-v^0})}}{\log\alpha}\right\rceil,1\right\},
\end{equation}
\begin{equation}\label{bound4}
F(\alpha)\le N^{VI(\epsilon)}_I(\alpha):=\max\left\{\left\lceil\frac{\log\frac{(1-\alpha)\epsilon}{sp(v^1)+(1+\alpha)sp(v^0)}}{\log\alpha}\right\rceil,1\right\}.
\end{equation}
\end{cor}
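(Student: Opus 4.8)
The plan is to prove both inequalities \eqref{bound3} and \eqref{bound4} at once by working with the integer threshold conditions behind the formulas \eqref{bound1} and \eqref{bound2}, rather than by manipulating their logarithmic closed forms. Recall from the proof of Theorem~\ref{th1} that $n^*(\alpha)$ is, by construction, the smallest integer $n\ge1$ for which \eqref{c} holds, i.e. $(\alpha\gamma)^{n-1}sp(v_{1,\alpha}-v^0)\le\frac{1-\alpha}{\alpha}\epsilon$. Specializing \eqref{bound1} to $\gamma=1$ produces exactly the right-hand side of \eqref{bound3}, and the identical derivation shows that this quantity is the smallest integer $n\ge1$ satisfying $\alpha^{n-1}sp(v_{1,\alpha}-v^0)\le\frac{1-\alpha}{\alpha}\epsilon$.

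First I would record the elementary comparison $(\alpha\gamma)^{n-1}\le\alpha^{n-1}$, valid for every integer $n\ge1$ because $0<\alpha\gamma\le\alpha<1$ and the exponent $n-1$ is nonnegative. Consequently, whenever an index $n$ satisfies the threshold with factor $\alpha^{n-1}$ it also satisfies the threshold with factor $(\alpha\gamma)^{n-1}$; in set terms, the collection of admissible indices for the $(\alpha\gamma)$-condition contains the collection of admissible indices for the $\alpha$-condition. Since each threshold is monotone in $n$ (its left-hand side decreases as $n$ grows), both admissible sets are up-sets, and the least element of the larger up-set is no greater than the least element of the smaller one. This yields the inequality \eqref{bound3}; the outer $\max\{\cdot,1\}$ and the ceiling are preserved automatically, because both sides are by definition the smallest integer $n\ge1$ meeting their respective conditions.

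For \eqref{bound4} the argument is verbatim the same with the constant $C':=sp(v^1)+(1+\alpha)sp(v^0)$ in place of $sp(v_{1,\alpha}-v^0)$; here $C'>0$ is guaranteed by the standing hypothesis $sp(v^1)+sp(v^0)>0$, since $C'\ge sp(v^1)+sp(v^0)$. The same comparison of the $(\alpha\gamma)$- and $\alpha$-thresholds, now with $C'$, shows that $F(\alpha)\le N^{VI(\epsilon)}_I(\alpha)$.

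I expect no genuine analytic obstacle here; the work is essentially a matter of choosing the right formulation. The point that rewards care is the decision to argue via the threshold conditions rather than to compare the two logarithmic fractions directly. A direct comparison is deceptively delicate: the fraction appearing in \eqref{bound1} is monotone in $\gamma$, but the \emph{direction} of that monotonicity depends on whether the argument of the numerator's logarithm exceeds $1$, and in the regime where it lies below $1$ the pointwise inequality runs the wrong way and is rescued only by the truncation $\max\{\cdot,1\}$, with both sides collapsing to $1$. The threshold formulation sidesteps this case split and treats all regimes uniformly. Finally, the degenerate situations $sp(v_{1,\alpha}-v^0)=0$ and $\gamma=0$ need no separate treatment here, since they are already handled by Theorem~\ref{th1} and the standing assumption.
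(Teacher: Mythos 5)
Your proposal is correct and follows essentially the same route as the paper: both identify each bound as the smallest integer $n\ge1$ satisfying the corresponding threshold inequality and then use $(\alpha\gamma)^{n-1}\le\alpha^{n-1}$ (from $\gamma\le1$) to conclude that the $(\alpha\gamma)$-threshold is met no later than the $\alpha$-threshold. Your write-up is in fact slightly more careful than the paper's (which contains minor typos, e.g.\ writing the $(\alpha\gamma)$-condition where the $\alpha$-condition is meant, and omitting the factor $(1+\alpha)$ in the constant for \eqref{bound4}), and your remark about why a direct comparison of the logarithmic expressions would be delicate is a valid observation, though not needed for the argument.
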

\begin{proof}
We recall that $n^*(\alpha)$ is defined as the smallest $n\in\mathbb{N}$ satisfying  \eqref{c}.  The right-hand side of \eqref{bound3} the smallest $n\in\mathbb{N}$ satisfying $\left(\alpha\gamma\right)^{n-1}sp(v_{1,\alpha}-v^0)\le\frac{1-\alpha}{\alpha}\epsilon.$ Since $\alpha,\gamma\in(0,1),$ inequality \eqref{bound3} holds.  Inequality \eqref{bound4} holds because of the similar reasons, where $F(\alpha)$ and $N^{VI(\epsilon)}_I(\alpha)$ are the smallest $n\in\mathbb{N}$ satisfying $\left(\alpha\gamma\right)^{n-1}[sp(v^1)+sp(v^0)]\le\frac{1-\alpha}{\alpha}\epsilon$ and $\alpha^{n-1}[sp(v^1)+sp(v^0)]\le\frac{1-\alpha}{\alpha}\epsilon$ respectively.
\end{proof}

We  notice that, if the function $F$ from \eqref{bound2} is minimized in $v^0$, than the smallest value is attained when $sp(v^0)=0,$ which is $v^0=const.$ The following corollary provides upper bounds for $v^0=const$ including $v^0\equiv 0.$
\begin{cor}\label{cor2} Let $\alpha\in(0,1),$ and let $v^0=const.$  If $sp(v^1)>0,$ then  Algorithm~\ref{algo}  finds a deterministic $\epsilon$-optimal policy after a finite number of iterations bounded above by
\begin{equation*}
F^*(\alpha):=\max\left\{\left\lceil\frac{\log\frac{(1-\alpha)\epsilon\gamma}{sp(v^1)}}{\log(\alpha\gamma)}\right\rceil,1\right\}\le\max\left\{\left\lceil\frac{\log\frac{(1-\alpha)\epsilon}{sp(v^1)}}{\log\alpha}\right\rceil,1\right\}.
\end{equation*}
\end{cor}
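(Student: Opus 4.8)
The plan is to treat this statement as the specialization of Theorem~2 (equation~\eqref{bound2}) and Corollary~1 (equation~\eqref{bound4}) to the case $v^0=const$, so that almost no new work is required. First I would observe that $v^0=const$ forces $sp(v^0)=0$. Consequently the hypothesis $sp(v^1)>0$ of the corollary is exactly the hypothesis $sp(v^1)+sp(v^0)>0$ of Theorem~2 under this specialization, so Theorem~2 applies. Substituting $sp(v^0)=0$ into the denominator $sp(v^1)+(1+\alpha)sp(v^0)$ of $F(\alpha)$ in \eqref{bound2} collapses it to $sp(v^1)$, while the base $\log(\alpha\gamma)$ and the numerator $\log\frac{(1-\alpha)\epsilon\gamma}{\,\cdot\,}$ are unchanged, whence $F(\alpha)=F^*(\alpha)$ for this $v^0$. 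Theorem~2 then yields directly that Algorithm~\ref{algo} stops after at most $F^*(\alpha)<\infty$ iterations and returns a deterministic $\epsilon$-optimal policy, which establishes the first assertion.

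For the displayed inequality I would invoke \eqref{bound4}: with $sp(v^0)=0$ its right-hand side $N^{VI(\epsilon)}_I(\alpha)$ reduces to $\max\{\lceil\log((1-\alpha)\epsilon/sp(v^1))/\log\alpha\rceil,1\}$, and the already established inequality $F(\alpha)\le N^{VI(\epsilon)}_I(\alpha)$ is then precisely the claim. Alternatively, I would reprove it in one line by the comparison used for Corollary~1: $F^*(\alpha)$ and the right-hand side are, respectively, the least integers $n\ge1$ satisfying $(\alpha\gamma)^{n-1}sp(v^1)\le\frac{1-\alpha}{\alpha}\epsilon$ and $\alpha^{n-1}sp(v^1)\le\frac{1-\alpha}{\alpha}\epsilon$; since $0<\alpha\gamma\le\alpha<1$ gives $(\alpha\gamma)^{n-1}\le\alpha^{n-1}$ for every $n\ge1$, any $n$ meeting the second threshold also meets the first, so the minimal index for the $\alpha\gamma$ inequality is no larger.

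I do not expect a genuine obstacle here, since the statement is a corollary obtained by setting one seminorm to zero and the work is essentially bookkeeping. The only points deserving a moment of care are, first, confirming that the logarithms and fractions are well defined, which holds because $sp(v^1)>0$ keeps every denominator strictly positive and every argument of the logarithm positive; and second, checking that the outer wrappers $x\mapsto\lceil x\rceil$ and $x\mapsto\max\{x,1\}$ are non-decreasing, so that the comparison of the raw log-ratios transfers intact to the wrapped quantities $F^*(\alpha)$ and the right-hand side. I would also note, as the remark preceding the corollary already indicates, that $v^0=const$ is the choice minimizing $F(\alpha)$ over $v^0$, so this corollary records the sharpest form of the $\gamma$-dependent bound.
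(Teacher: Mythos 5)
Your proposal is correct and follows essentially the same route as the paper: the paper's proof is the one-line remark that the corollary follows from \eqref{bound4}, i.e., from specializing Theorem~2 and Corollary~1 to $sp(v^0)=0$, which is exactly your first argument (and your alternative one-line comparison of the least $n$ satisfying $(\alpha\gamma)^{n-1}sp(v^1)\le\frac{1-\alpha}{\alpha}\epsilon$ versus $\alpha^{n-1}sp(v^1)\le\frac{1-\alpha}{\alpha}\epsilon$ is just the paper's own proof of Corollary~1 restated). No gaps; your added care about well-definedness of the logarithms and monotonicity of $\lceil\cdot\rceil$ and $\max\{\cdot,1\}$ is sound bookkeeping that the paper leaves implicit.
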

\begin{proof}
This corollary follows from \eqref{bound4}.
\end{proof}

\begin{exa}\label{ex2} This example illustrates the monotonicity of the polynomial upper bound $F(\alpha)$ for computing $\epsilon$-optimal policies and non-monotonicity of the number of calculations to find an optimal policy by value iterations.  {\rm The    following MDP is taken from \cite{WVI}.  Let the state space be $\mathbb{X}=\left\{1,2,3\right\}$ and the action space be $\mathbb{A}=\left\{b,c\right\}$. Let $A(1)=\mathbb{A}$, $A(2)=A(3)=\left\{b\right\}$ be the sets of actions available at states $1,2,3$ respectively; see  \autoref{fig:1EF}. The transition probabilities are given by $p(2|1,c)=p(3|1,b)=p(2|2,b)=p(3|3,b)=1$. The one-step rewards are $r(1,b)=r(2,b)=0, r(3,b)=1$, and $r(1,c)=1-\exp{(-M)}$ where $M>0$. We set $v^0(x)=0$ for $x\in\mathbb{X}$.
\begin{figure}[H]
\centerline{\includegraphics[scale=1.2]{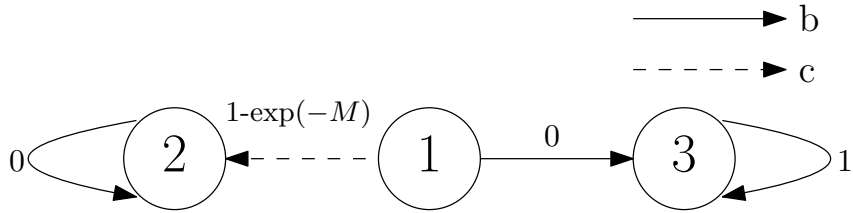}}
\caption{MDP diagram for Example \ref{ex2}.}
\label{fig:1EF}
\end{figure}
As shown in \cite{WVI}, for $\alpha=0.5$ the number of value iterations  required  to find an optimal policy  increases to infinity as $M$ increases to infinity. This shows that the value iteration algorithm for computing the optimal policy is not strongly polynomial. However, $sp(v^1)=1$ does not change with the increasing $M$ in this example. As follows from Corollary~\ref{cor2}, for fixed $\epsilon>0$ and $\alpha\in (0, \alpha^*]$ with $\alpha^*\in (0,1),$ the number of required iterations $N^{VI(\epsilon)}_I$ for Algorithm~\ref{algo} is uniformly bounded no matter how large  $M$ is; see  \autoref{fig:1}.\ \hfill$\square$}
\begin{figure}
\centerline{\includegraphics[scale=0.43]{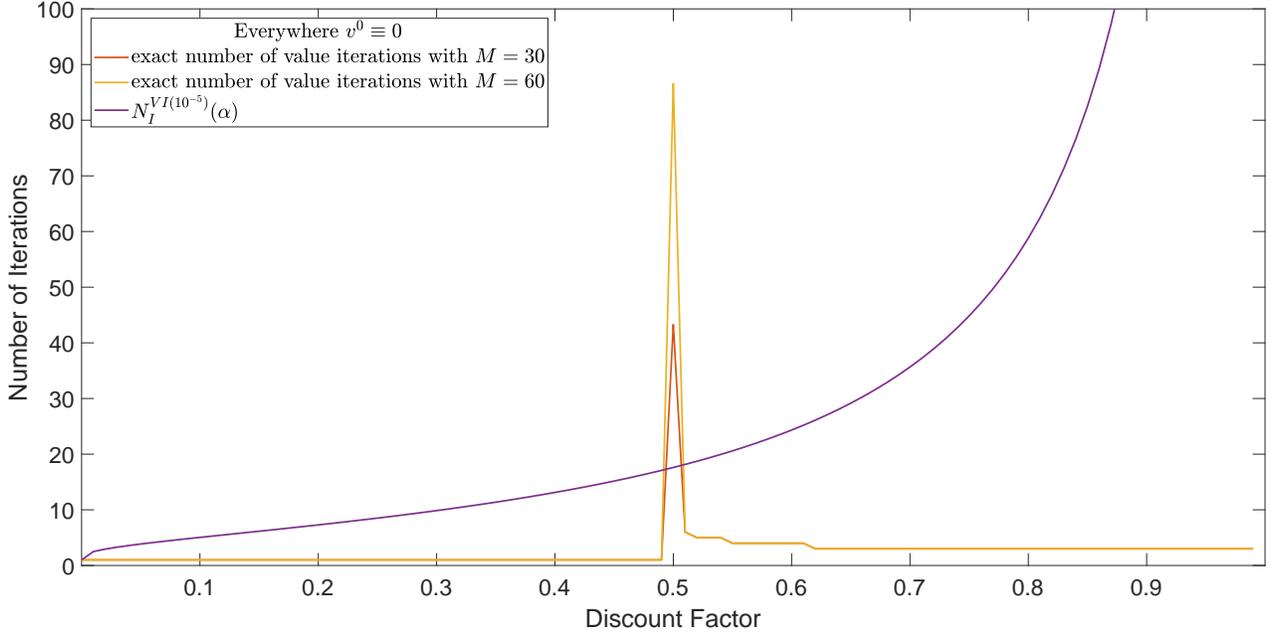}}
\caption[]{Exact numbers of iterations for finding optimal policies and $N^{VI(10^{-5})}_I$ in Example~\ref{ex2}.\footnotemark}
\label{fig:1}
\end{figure}
\end{exa}
Lewis and Paul \cite{TP} provide examples of MDPs for which the numbers of iterations required for computing an optimal policy by value iterations can tend to infinity even for discount factor bounded away from $1$. Here we provide a significantly simpler example.

\begin{exa}\label{ex3} This example shows that the required number of  value iterations to compute an optimal policy may be unbounded as a function of the discount factor $\alpha$ when $\alpha\in (0,\alpha^*]$ for some $\alpha^*\in (0,1).$  {\rm Consider an MDP with the state space  $\mathbb{X}=\left\{1,2,3\right\},$  with the action space $\mathbb{A}=\left\{b,c\right\},$ and with the sets of actions $A(1)=\mathbb{A}$ and $A(2)=A(3)=\left\{b\right\}$   available at states $1,$ $2,$ and $3$ respectively;  see \autoref{fig:213}. The transition probabilities are given by $p(2|1,c)=p(3|1,b)=p(2|2,b)=p(3|3,b)=1$. The one-step rewards are  $r(1,c)=r(2,b)=1, r(1,b)=2$, and $r(3,b)=0$.

We set $v^0(x)=0$ for $x\in\mathbb{X}$.
\begin{figure}[h!]
\centerline{\includegraphics[scale=1.2]{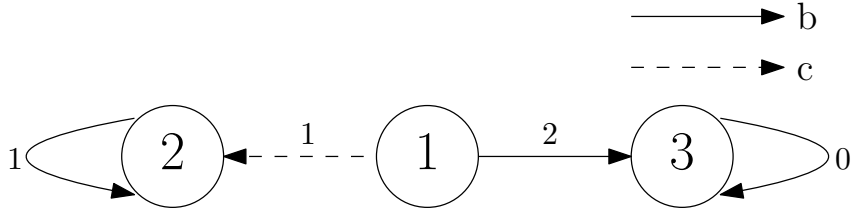}}
\caption{MDP diagram for Example \ref{ex3}.}
\label{fig:213}
\end{figure}
\footnotetext{Numbers of iterations are integers.  In particular, in this example the graphs display step functions. This graph is generated by the Matlab, and it is automatically smoothened to clarify comparisons.}
There are only two deterministic policies denoted by $\phi$ and $\psi,$ which  differ only at state $1$ with $\phi(1)=c$ and $\psi(1)=b$. Observe that $v_\alpha^\pi(x)=v_\alpha(x)$ for $x=2,3$ and all $\pi\in\Pi.$ In addition,
\[v_\alpha^\phi(1)=\sum_{k=0}^\infty\alpha^k=\frac{1}{1-\alpha}, \qquad v_\alpha^\psi(1)=2.\]
Therefore,  $\psi$ is optimal for $\alpha \in [0,0.5]$ and $\phi$ is optimal for $\alpha \in [0.5,1).$ Let us consider the number $\Delta$ computed at the $n$-th value iteration at step 2 of Algorithm~\ref{algo},  $n\in\mathbb{N}.$  For this MDP, $\Delta=v-u$ with $v=v_{n,\alpha}=T_\alpha u$ and $u=v_{n-1,\alpha}.$  Observe that $v_{n, \alpha}(3)=0$ and  $v_{n,\alpha}(2)=\sum_{i=0}^{n-1}\alpha^i$ for all $n\in\mathbb{N}.$ If $n=1,2,$ then $v_{n,\alpha}(1)=2$. For $n\ge3$
\[ v_{n,\alpha}(1)=\max\left\{1+\alpha\sum_{i=0}^{n-2}\alpha^i, 2+0\right\}=
\begin{cases}
2,\  &{\rm if\ }\alpha\in (0,0.5+\delta_n];\\
\sum_{i=0}^{n-1}\alpha^i, \ &{\rm if\ } \alpha\in [0.5+\delta_n,1),
\end{cases}
\]
where $0<\delta_n<0.5$ and $\sum_{i=0}^{n-1}{(0.5+\delta_n)}^i=2$. Clearly $\delta_n$ is strictly decreasing as $n$ increases and $\lim_{n\to\infty}\delta_n=0$. This implies that for $\alpha\in (0,0.5]$ the value iterations identify the optimal policy $\psi$ at the first iteration, and for every $n\ge3$ the value iterations identify the optimal policy $\phi$ at the $n$-th iteration for $\alpha\in(0.5+\delta_{n+1},0.5+\delta_n)$.\hfill$\square$}
\end{exa}

Example~\ref{ex2} and Example~\ref{ex3} represent the main difficulties of running value iterations for computing optimal policies. Nevertheless, the results of this paper show that these difficulties can be easily overcome by using  value iterations for computing $\epsilon$-optimal policies.

\section{Acknowledgement}
This research  was partially supported by the National Science Foundation grant CMMI-1636193.

\end{document}